\tikzset{ext/.style={circle, draw,inner sep=1pt},
int/.style={circle,draw,fill,inner sep=0, minimum size=5},
xit/.style={cross out,draw,inner sep=0, minimum size=5},
nil/.style={inner sep=1pt},
descr/.style={inner sep=1.5pt, outer sep=0.1,font=\scriptsize}}
\tikzset{xst/.style={draw, cross out, minimum size=5, }}
\tikzset{exte/.style={circle, draw,inner sep=3pt},inte/.style={circle,draw,fill,inner sep=3pt}}
\tikzset{diagram/.style={matrix of math nodes, row sep=3em, column sep=2.5em, text height=1.5ex, text depth=0.25ex}}
\tikzset{diagram2/.style={matrix of math nodes, row sep=0.5em, column sep=0.5em, text height=1.5ex, text depth=0.25ex}}
\theoremstyle{plain}
  \newtheorem{thm}{Theorem}
  \newtheorem{defi}{Definition}
  \newtheorem{prop}{Proposition}
  \newtheorem{lemma}{Lemma}
\theoremstyle{definition}
  \newtheorem{ex}{Example}
  \newtheorem{rem}{Remark}
\newcommand{\alg}[1]{\mathfrak{{#1}}}
\newcommand{\co}[2]{\left[{#1},{#2}\right]} 
\newcommand{\ad}{{\text{ad}}}
\newcommand{\p}{\partial}
\newcommand{\R}{{\mathbb{R}}}
\newcommand{\KGra}{{\mathsf{KGra}}}
\newcommand{\Duflo}{{\mathit{Duflo}}}
\newcommand{\assoc}{{\mathit{assoc}}}
\newcommand{\Kontsevich}{ {\mathit{Kontsevich} }}
\newcommand{\Shoikhet}{{ \mathit{Shoikhet} }}
\newcommand{\Lie}{\mathsf{Lie}}
\newcommand{\Ass}{\mathit{Assoc}}
\newcommand{\PBW}{\mathit{PBW}}
\newcommand{\CFFR}{\mathit{CFFR}}
\newcommand{\brane}{\mathit{brane}}
\newcommand{\GC}{\mathsf{GC}}
\newcommand{\bpm}{\begin{pmatrix}}
\newcommand{\epm}{\end{pmatrix}}
\newcommand{\Tpoly}{T_{\rm poly}}
\newcommand{\Dpoly}{D_{\rm poly}}
\newcommand{\ExpGC}{ {\mathrm{Exp}GC_{2,cl}^0} }
\newcommand{\curv}{{\mathit{curv}}}
\newcommand{\chain}{{\mathit{chain}}}
\newcommand{\grt}{{\mathfrak{grt}}}
\newcommand{\mU}{\mathcal{U}}
\newcommand{\mV}{\mathcal{V}}
\newcommand{\mW}{\mathcal{W}}
\DeclareMathOperator{\tr}{tr}
\begin{document}
\title{Characteristic classes in deformation quantization}
\author{Thomas Willwacher}
\address{Institute of Mathematics\\ University of Zurich\\ Winterthurerstrasse 190 \\ 8057 Zurich, Switzerland}
\email{thomas.willwacher@math.uzh.ch}

\keywords{Formality, Deformation Quantization}

\begin{abstract}
In deformation quantization one can associate five characteristic functions to (stable) formality morphisms on cochains and chains and to ``two-brane'' formality morphisms. We show that these characteristic functions agree.
\end{abstract}
\maketitle

\section{Introduction}
Let $\Tpoly(\R^n)=\Gamma(\R^n, \wedge^\bullet T\R^n)$ be the space of multivector fields on $\R^n$ and let $\Dpoly(\R^n)$ be the space of multidifferential operators on $\R^n$. The central result of deformation quantization is M. Kontsevich's Formality Theorem \cite{K1}, stating that there is a $\Lie_\infty$ quasi-isomorphism
\[
\mU^{\Kontsevich}\colon \Tpoly(\R^n)[1] \to \Dpoly(\R^n)[1].
\] 
Here we understand $\Tpoly(\R^n)[1]$ as a Lie algebra endowed with the Schouten-Nijenhuis bracket and $\Dpoly(\R^n)[1]$ as a Lie algebra endowed with the Gerstenhaber bracket.
The differential forms $\Omega_\bullet(\R^n)$ on $\R^n$, with non-positive grading, form a Lie module over $\Tpoly(\R^n)[1]$, and similarly the (topological) Hochschild chains $C_\bullet(\R^n)=C_\bullet(C^\infty(\R^n), C^\infty(\R^n))$ form a module over the multidifferential operators $\Dpoly(\R^n)$. For a more detailed description of these objects and the actions we refer the reader to \cite{tsygan}.
It was conjectured by B. Tsygan \cite{tsygan} and shown by B. Shoikhet \cite{shoikhet} that there is a $\Lie_\infty$ quasi-isomorphism of modules
\[
\mV^{\Shoikhet} \colon C_\bullet(\R^n) \to \Omega_\bullet(\R^n).
\]
A globalized version of this statement was shown by V. Dolgushev \cite{dolgushev}.
Here the $\Lie_\infty$ action of $\Tpoly(\R^n)[1]$ on $C_\bullet(\R^n)$ is obtained by pulling back the action of $\Dpoly(\R^n)[1]$ on $C_\bullet(\R^n)$ via $\mU^{\Kontsevich}$. In particular, the  statement that $\mV^{\Shoikhet}$ is a $\Lie_\infty$ morphism of modules implicitly references $\mU^{\Kontsevich}$.

The formality morphisms $\mU^{\Kontsevich}$ and $\mV^{\Shoikhet}$ are given by explicit ``sum of graphs'' formulas:
\begin{align}
\label{equ:mUK}
\mU_k^{\Kontsevich} &= \sum_\Gamma c_\Gamma^{\Kontsevich} D_\Gamma \\
\label{equ:mVS}
\mV_k^{\Shoikhet} &= \sum_{\tilde \Gamma} \tilde c_{\tilde \Gamma}^{\Shoikhet} \tilde D_{\tilde \Gamma}.
\end{align}
Here $\mU_k$ (respectively $\mV_k$) is the $k$-th component of the $\Lie_\infty$ morphism $\mU$ (respectively of $\mV_k$). The top sum runs over the set of isomorphism classes of \emph{Kontsevich graphs} with $k$ type I vertices. For the definition of these graphs we refer the reader to \cite{K1}, an example can be found in Figure \ref{fig:example}. Finally
\[
D_\Gamma : S^k \Tpoly(\R^n)[2] \to \Dpoly(\R^n)[2]
\]
is an operator naturally associated to a Kontsevich graph $\Gamma$.
It implicitly depends on the dimension $n$ of the underlying space $\R^n$.
The coefficients $c_\Gamma^{Kontsevich}$ are numbers.
Similarly, in \eqref{equ:mVS} the sum ranges over all isomorphism classes of \emph{Shoikhet graphs} with $k$ type I vertices (see \cite{shoikhet} for the definition and Figure \ref{fig:example} for an example). The coefficients $\tilde c_{\tilde \Gamma}^{Shoikhet}$ are again numbers and 
\[
\tilde D_{\tilde \Gamma} : S^k \Tpoly[2](\R^n)\otimes  C_\bullet(\R^n) \to \Omega_\bullet(\R^n)
\]
are morphisms naturally associated to Shoikhet graphs, cf. \cite{shoikhet}.

\begin{figure}
\centering
\begin{tikzpicture}
[every edge/.style={draw, -triangle 45}, scale=.5
]
\draw (-6,-4) -- (5,-4);
\node [int] (v2) at (-3,-1) {};
\node [int] (v1) at (-1,2) {};
\node [int] (v5) at (0,-1) {};
\node [int] (v3) at (-3,-4) {};
\node [int] (v4) at (0,-4) {};
\node [int] (v7) at (3,-4) {};
\node [int] (v6) at (2,1) {};
\draw  (v1) edge (v2);
\draw  (v2) edge (v3);
\draw  (v2) edge (v4);
\draw  (v5) edge (v4);
\draw  (v2) edge (v5);
\draw  (v1) edge (v6);
\draw  (v6) edge (v5);
\draw  (v6) edge (v7);

\begin{scope}[shift={(12,-1)}]
\draw (0,0) circle (5);
\end{scope}
\node [int] (v13) at (17,-1) {};
\node [int] (v10) at (12,4) {};
\node [int] (v12) at (12,-4) {};
\node [int] (v11) at (15,0) {};
\node [int] (v9) at (9,-1) {};
\node [int] (v8) at (12,-1) {};
\draw  (v8) edge (v9);
\draw  (v8) edge (v10);
\draw  (v8) edge (v11);
\draw  (v11) edge (v12);
\draw  (v11) edge (v13);
\draw  (v11) edge (v10);
\draw  (v9) edge (v12);
\draw  (v9) edge (v10);
\end{tikzpicture}
\caption{\label{fig:example} An example of a Kontsevich graph (left) and a Shoikhet graph (right). }
\end{figure}
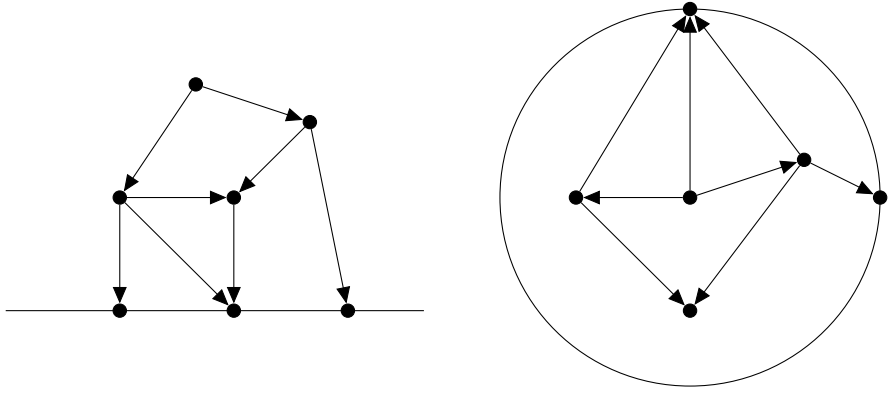

In \cite{vasilystable} formality morphisms given by sum-of-graphs formulas as above were called \emph{stable}.
\begin{defi}[following  \cite{vasilystable}]
\label{def:stable}
A \emph{stable formality morphism on cochains} is a collection of numbers $\{c_\Gamma\}_\Gamma$, one for each Kontsevich graph, such that the formulas 
\begin{align}
\label{equ:mU}
\mU_k &= \sum_\Gamma c_\Gamma D_\Gamma 
\end{align}
define a $\Lie_\infty$ quasi-isomorphism of Lie algebras
\begin{align*}
\mU &\colon \Tpoly(\R^n)[1] \to \Dpoly(\R^n)[1]
\end{align*}
for all $n$($=\dim \R^n$), and such that $\mU_1$ is the Hochschild-Kostant-Rosenberg morphism. 

A \emph{stable formality morphism on cochains and chains} is a stable formality morphism on cochains together with a collection of numbers $\{ \tilde c_{\tilde \Gamma}\}_{\tilde \Gamma}$, one for each Shoikhet graph, such that the formulas 
\begin{align}
\label{equ:mV}
\mV_k &= \sum_{\tilde \Gamma} \tilde c_{\tilde \Gamma} \tilde D_{\tilde \Gamma}
\end{align}
define a $\Lie_\infty$ quasi-isomorphism of modules
\begin{align*}
\mV &\colon C_\bullet(\R^n) \to \Omega_\bullet(\R^n)
\end{align*}
for all $n$, and such that $\mV_0$ is the Connes-Hochschild-Kostant-Rosenberg morphism. 
\end{defi}


\begin{ex}
\label{ex:drinfeld}
In particular, to every Drinfeld associator $\Phi$ one may associate a stable formality morphism of cochains as follows:
\begin{enumerate}
\item To the Alekseev-Torossian Drinfeld associator $\Phi_{AT}$, see \cite{ATassoc, pavol}, we associate the Kontsevich stable formality morphism $\mU^\Kontsevich$.
\item Let $\Phi$ be any Drinfeld associator. 
The Grothendieck-Teichm\"uller group acts freely transitively on the set of Drinfeld associators.
Hence there is a unique element $g$ of the Grothendieck-Teichm\"uller group that maps $\Phi_{AT}$ to $\Phi$. Using the pro-unipotence of the Grothendieck-Teichm\"uller group we may write 
\[
g=\exp(\psi)
\]
For a unique $\psi$ in the  Grothendieck-Teichm\"uller Lie algebra $\grt_1$.
This element $\psi$ may be associated a graph cohomology class in M. Kontsevich's graph complex $\GC_2$ (see \cite[section 3]{grt}), which is represented, say, by some degree 0 cocycle $\gamma\in \GC_2$.
Now $\GC_2$ naturally acts on the set of stable formality morphisms of cochains (see  \cite{vasilystable, brformality, grt}). We define the stable formality morphism associated to $\Phi$ as 
\[
\exp(\gamma) \mU^\Kontsevich.
\]
Note that this is well defined only up to homotopy, since one had to make a choice in picking one representative of the graph cohomology class canonically associated to $\psi$.

In a similar way one may also obtain a stable formality morphism of cochains and chains as discussed in \cite{grtonforms}.
\end{enumerate} 
\end{ex}

\begin{rem}
Definition \ref{def:stable} differs slightly from the one given in \cite[Definition 5.1]{vasilystable} by V. Dolgushev. There, a stable formality morphism was defined as an operad map from a colored operad governing open closed homotopy algebras to a colored operad $\KGra$, satisfying some conditions. Elements of $\KGra$ are essentially linear combinations of Kontsevich graphs. We leave it to the reader to check that both definitions agree. 
\end{rem}

\begin{rem}
Note that all formality morphism constructed as in example \ref{ex:drinfeld} can be globalized, i.~e., they satisfy suitable properties P1)-P5) stated by M. Kontsevich in \cite{K1}.
\end{rem}

\subsection{A remark on signs and prefactors}
The explicit definition of M. Kontsevich's formality morphism, correct with signs and prefactors, and the definition of the symbols $D_\Gamma$ is quite lengthy to state. In fact, a separate paper \cite{AMM} has been written just about the signs and prefactors. It involves conventional choices at various places in the construction. We want to avoid flooding this paper with pages of definitions to fix the signs. 
To still obtain well-defined numbers $c_\Gamma$ we adopt the following conventions:

\begin{enumerate}
\item For each isomorphism class of Kontsevich (resp. Shoikhet) graphs we fix once and for all a representative graph, together with an ordering of the edges. Below, when we introduce certain such graphs, we will indicate the ordering of the edges by writing numbers next to the edges.

\item Our conventions regarding $D_\Gamma$  are assumed to be chosen such that the formulas \eqref{equ:mUK} are correct, for $c_\Gamma$  given by Kontsevich's integral
\begin{align*}
c_\Gamma^{\Kontsevich} &= \int \prod_{(i,j)} \frac{1}{2\pi} d\arg \left(\frac{z_i-z_j}{\bar z_i-z_j} \right)
\end{align*}
where the product is over all edges, in the order that was specified once and for all for this isomorphism class of Kontsevich graphs.
Similarly, we choose our conventions regarding $\tilde D_{\tilde \Gamma}$ such that \eqref{equ:mVS} is correct for $\tilde c_{\tilde \Gamma}$ being the usual Shoikhet integral, without any additional prefactors.
\end{enumerate}

A careful discussion of signs for the Kontsevich morphism, which is somewhat shorter than \cite{AMM} (but still spans many pages) has been given by the author in \cite{mecyccochains}.

\subsection{Homotopies and homotopy invariant functions}
\label{sec:homotopies}
Recall that an $L_\infty$ structure on $\alg{g}$ is a degree 1, square zero coderivation on $S^+\alg g[1]$, the cofree cocommutative coalgebra (without counit) cogenerated by $\alg{g}[1]$. An $L_\infty$ morphism between $L_\infty$ algebras $\alg{g}$ and $\alg{h}$ is a map of coalgebras
\[
f: S^+g[1] \to S^+\alg{h}[1]
\]
compatible with the given coderivations.
Let us say that two $L_\infty$ morphism $f$, $g$ from $\alg g$ to $\alg h$ are directly homotopic if there is an $L_\infty$ morphism 
\[
F \colon \alg g \to \alg h[t,dt]
\]
such that the restriction to $t=0$ (respectively $t=1$) agrees with $f$ (respectively with $g$).
Concretely, $F$ may be written as 
\[
F = f_t + h_t dt
\]
where $f_t$ is a (polynomial) family of $L_\infty$ morphisms interpolating between $f_0=f$ and $f_1=g$. We call the other component, $h_t$ the \emph{homotopy}.

We say that two $L_\infty$ morphisms $f,g$ are \emph{homotopic}, if there is some (finite) tuple of $L_\infty$ morphisms $(a_1,\dots, a_k)$ such that $f$ is directly homotopic to $a_1$, each $a_j$ is directly homotopic to $a_{j+1}$ and $a_k$ is directly homotopic to $g$. Clearly being homotopic is an equivalence relation on the set of $L_\infty$ morphisms from $\alg g$ to $\alg h$.
A function from the set of $L_\infty$ morphisms from $\alg g$ to $\alg h$ to some other set is \emph{homotopy invariant} if it is constant on equivalence classes.  For a more detailed discussion of homotopies between homotopy morphisms we refer the reader to \cite{DotsenkoPoncin}.

The above notion of homotopy may be transferred to stable formality morphisms with minor changes \cite[section 5]{vasilystable}. So let $\mU$, $\mU'$ be stable formality morphisms (say of cochains, the case for cochains and chains is analogous). We say that $\mU$, $\mU'$ are directly homotopic if there is a collection of polynomials $c_\Gamma(t, dt)\in \R[t, dt]$ such that:
\begin{enumerate}
\item The formulas
\[
\tilde \mU_k := \sum_\Gamma c_\Gamma(t, dt) D_\Gamma 
\]
define an $L_\infty$ morphism $\Tpoly(\R^n)\to \Dpoly(\R^n)$ for each $n$.
\item Restricting $\tilde \mU$ to fixed $t$ yields a family of stable formality morphisms interpolating between $\mU$ (for $t=0$) and $\mU'$ (reached at $t=1$).
\end{enumerate}
As above one may split 
\begin{equation}
\label{equ:homotopy}
\tilde \mU = \tilde \mU_t + h_t dt
\end{equation}
where $\tilde \mU_t$ is the restriction of $\tilde \mU$ to fixed $t$ and we call $h_t$ the homotopy.

Again we define the equivalence relation of being homotopic as the transitive closure of the relation of being directly homotopic.
For more details we refer the reader to \cite{vasilystable}.

A function on the set of stable formality morphisms is called homotopy invariant if it is constant on equivalence classes of the above equivalence relation.
Of course this is equivalent to saying that the function takes the same values on directly homotopic stable formality morphisms.

\subsection{Characteristic functions }
We will consider the following four characteristic functions:
\begin{itemize}
\item Let $\mU$ be a stable formality morphism of cochains. We set $f^{\Duflo}(x)=\sum_{j\geq 2} \lambda_j^{\Duflo} x^j$ where $\lambda_j^{\Duflo}=\frac{1}{j} c_{\Gamma^{(I)}_j}-\frac{1}{j} c_{\Gamma^{(II)}_j}$ and $c_{\Gamma^{(I)}_j}$ and $c_{\Gamma^{(II)}_j}$ are the coefficients of the graphs 
\begin{equation}
\label{equ:wgraph1}
\Gamma^{(I)}_j =
\begin{tikzpicture}
[every edge/.style={draw, triangle 45-},
scale=.8, baseline=-0.65ex, yshift=1.5cm
]
\node [int] (v7) at (0,0) {};
\node [int] at (30:2) {};
\node [int] (v4) at (-30:2) {};
\node [int] (v3) at (-90:2) {};
\node  (v6) at (90:2) {$\dots$};
\node  at (90:1) {$\dots$};
\node [int] (v1) at (150:2) {};
\node [int] (v5) at (30:2) {};
\node [int] (v2) at (210:2) {};
\draw[font=\scriptsize]  (v1) edge node[descr,auto, swap] {$2j$} (v2);
\draw[font=\scriptsize]  (v2) edge node[descr,auto,swap] {2} (v3);
\draw[font=\scriptsize]  (v3) edge node[descr,auto,swap] {4} (v4);
\draw[font=\scriptsize]  (v4) edge node[descr,auto,swap] {6} (v5);
\draw[font=\scriptsize]  (v5) edge node[descr,auto,swap] {8} (v6);
\draw[font=\scriptsize]  (v6) edge node[above, sloped] {$2j-2$} (v1);
\draw[font=\scriptsize]  (v7) edge node[descr, auto] {1} (v2);
\draw[font=\scriptsize]  (v7) edge node[descr,auto] {3} (v3);
\draw[font=\scriptsize]  (v7) edge node[descr,auto] {5} (v4);
\draw[font=\scriptsize]  (v7) edge node[descr,auto] {7} (v5);
\draw[font=\scriptsize]  (v7) edge node[descr, above, sloped, font=\tiny] {$2j-1$} (v1);
\draw (-4,-4) -- (3,-4);
\end{tikzpicture}
\end{equation}
in $\mU_{j+1}$ and 
\begin{equation}
\label{equ:wgraph2}
\Gamma^{(II)}_j=
\begin{tikzpicture}
[every edge/.style={draw, triangle 45-}, scale=.8, baseline=-0.65ex,yshift=1.5cm
]
\node [int] (v7) at (-1,-4) {};
\node [int] at (30:2) {};
\node [int] (v4) at (-30:2) {};
\node [int] (v3) at (-90:2) {};
\node  (v6) at (90:2) {$\dots$};
\node  at (90:1) {$\dots$};
\node [int] (v1) at (150:2) {};
\node [int] (v5) at (30:2) {};
\node [int] (v2) at (210:2) {};
\node[int] (vb) at (1,-4) {};
\draw[font=\tiny]  (v1) edge node[descr,below,sloped] {$2j-4$} (v2);
\draw[font=\tiny]  (v2) edge node[descr,above,sloped,swap, font=\tiny] {$2j-2$} (v3);
\draw[font=\tiny]  (v3) edge node[descr,auto,sloped,swap] {$2j$} (v4);
\draw[font=\scriptsize]  (v4) edge node[descr,auto,swap] {2} (v5);
\draw[font=\scriptsize]  (v5) edge node[descr,auto,swap] {4} (v6);
\draw[font=\tiny]  (v6) edge node[above, sloped] {$2j-6$} (v1);
\draw[font=\tiny]  (v7) edge node[descr,sloped, below] {$2j-3$} (v2);
\draw[font=\tiny]  (v7) edge node[descr,sloped,above,font=\tiny] {} (v3);
\draw[font=\scriptsize]  (vb) edge node[descr,auto] {1} (v4);
\draw[font=\scriptsize]  (v7) edge node[descr,auto] {3} (v5);
\draw[font=\scriptsize]  (v7) edge node[descr, above, sloped, font=\tiny] {} (v1);
\draw (-4,-4) -- (3,-4);
\end{tikzpicture}
\end{equation}
in $\mU_j$. The function $f^{\Duflo}$ appears in the proof of Duflo's Theorem through deformation quantization as in \cite{K1}, section 8.\footnote{More precisely, the series  $f^{\Duflo}$ appears in the morphism $I_{strange}$ of \cite[section 8.3.4]{K1}.}
\item Let $\mU$ be a stable formality morphism of cochains. We set $f^{curv}(x)=\sum_{j\geq 2} \lambda_j^{curv} x^j$ where $\lambda_j^{curv}=\frac{1}{j}c_{\Gamma^{(III)}_j}$ and $c_{\Gamma^{(III)}_j}$ is the coefficient of the wheel graph with spokes pointing outwards 
\begin{equation}
\label{equ:spokesout}
\Gamma^{(III)}_j =
\begin{tikzpicture}
[every edge/.style={draw, -triangle 45}, scale=.8, baseline=-0.65ex, yshift=1.5cm,
]
\draw (-4,-4) -- (3,-4);
\node [int] (v7) at (0,0) {};
\node [int] at (30:2) {};
\node [int] (v4) at (-30:2) {};
\node [int] (v3) at (-90:2) {};
\node  (v6) at (90:2) {$\dots$};
\node  at (90:1) {$\dots$};
\node [int] (v1) at (150:2) {};
\node [int] (v5) at (30:2) {};
\node [int] (v2) at (210:2) {};
\draw[font=\scriptsize]  (v1) edge node[descr,auto, swap] {$2j$} (v2);
\draw[font=\scriptsize]  (v2) edge node[descr,auto,swap] {2} (v3);
\draw[font=\scriptsize]  (v3) edge node[descr,auto,swap] {4} (v4);
\draw[font=\scriptsize]  (v4) edge node[descr,auto,swap] {6} (v5);
\draw[font=\scriptsize]  (v5) edge node[descr,auto,swap] {8} (v6);
\draw[font=\scriptsize]  (v6) edge node[above, sloped] {$2j-2$} (v1);
\draw[font=\scriptsize]  (v7) edge node[descr, auto] {1} (v2);
\draw[font=\scriptsize]  (v7) edge node[descr,auto] {3} (v3);
\draw[font=\scriptsize]  (v7) edge node[descr,auto] {5} (v4);
\draw[font=\scriptsize]  (v7) edge node[descr,auto] {7} (v5);
\draw[font=\scriptsize]  (v7) edge node[descr, above, sloped, font=\tiny] {$2j-1$} (v1);
\end{tikzpicture}
\end{equation}
in $\mU_{j+1}$. These graphs appear in \cite{calaquevdb}, \cite{vdb}, \cite{twmodules}, and in particular as a curvature term in the formality morphism with branes \cite{cfrelative, calaquebranes}.

\item Let $(\mU, \mV)$ be a stable formality morphism of cochains and chains. We set $f^{chain}(x)=\sum_{j\geq 2} \lambda_j^{chain} x^j$ where $\lambda_j^{chain}=\frac{1}{j} \tilde c_{\tilde \Gamma_j}$ and $\tilde c_{\tilde \Gamma_j}$ is the coefficient of the graph 
\begin{equation}
\label{equ:chainwheel}
\tilde \Gamma_j=
\begin{tikzpicture}
[every edge/.style={draw, -triangle 45}, scale=.8, baseline=-0.65ex
]
\draw (0,0) circle (4);
\node [int] at (0:4) {};
\node [int] (v7) at (0,0) {};
\node [int] at (30:2) {};
\node [int] (v4) at (-30:2) {};
\node [int] (v3) at (-90:2) {};
\node  (v6) at (90:2) {$\dots$};
\node  at (90:1) {$\dots$};
\node [int] (v1) at (150:2) {};
\node [int] (v5) at (30:2) {};
\node [int] (v2) at (210:2) {};
\draw[font=\scriptsize]  (v1) edge node[descr,auto, swap] {$2j$} (v2);
\draw[font=\scriptsize]  (v2) edge node[descr,auto,swap] {2} (v3);
\draw[font=\scriptsize]  (v3) edge node[descr,auto,swap] {4} (v4);
\draw[font=\scriptsize]  (v4) edge node[descr,auto,swap] {6} (v5);
\draw[font=\scriptsize]  (v5) edge node[descr,auto,swap] {8} (v6);
\draw[font=\scriptsize]  (v6) edge node[above, sloped] {$2j-2$} (v1);
\draw[font=\scriptsize]  (v7) edge node[descr, auto] {1} (v2);
\draw[font=\scriptsize]  (v7) edge node[descr,auto] {3} (v3);
\draw[font=\scriptsize]  (v7) edge node[descr,auto] {5} (v4);
\draw[font=\scriptsize]  (v7) edge node[descr,auto] {7} (v5);
\draw[font=\scriptsize]  (v7) edge node[descr, above, sloped, font=\tiny] {$2j-1$} (v1);
\end{tikzpicture}
\end{equation}
in $\mV_{j+1}$. These graphs determine the character map in deformation quantization, see \cite{CFW}.

\item For any Drinfeld associator $\Phi(X,Y)$ one defines the formal function $f^{\assoc}(x)=\sum_{j\geq 2} \lambda_j^{\assoc} x^j$, where $\lambda_j^{\assoc}$ is the coefficient of $X^{j-1}Y$ in $\Phi(X,Y)$, divided by $j$. So 
\[
\Phi(X,Y)= 1 + \sum_{j\geq 2} j \lambda_j^{\assoc} X^{j-1}Y + (\text{other terms})
\]
 The exponential of the function $-f^{\assoc}$ has been called Duflo function in \cite{AT} and the the $\Gamma$ function in \cite{enriquezgamma}.
\end{itemize}

\begin{ex}
\label{ex:samplecalc}
Several of these characteristic functions have been computed in the literature:
\begin{itemize}
\item Kontsevich computed \cite{K1} that for his stable formality morphism $\mU^\Kontsevich$
\[
f^{\Duflo}=
-\sum_{k=1}^\infty \frac{1}{2k} \frac{B_{2k}}{2(2k)!}x^{2k}
=
-\frac{1}{2} \log\left( \frac{e^{x/2}-e^{-x/2}}{x} \right)
\]
where $B_j$ is the $j$-th Bernoulli number.
In fact, it was shown by B. Shoikhet \cite{shwheel} that $c_{\Gamma^{(I)}_j}=0$ in this case. 

\item For the Kontsevich stable formality morphism $\mU^\Kontsevich$ it has been computed in \cite{vdb} that 
\[
f^{\curv}(x)=
-\sum_{k=1}^\infty \frac{1}{2k} \frac{B_{2k}}{2(2k)!}x^{2k}
=
-\frac{1}{2} \log\left( \frac{e^{x/2}-e^{-x/2}}{x} \right).
\]

Consider also the stable formality morphism of chains and cochains $(\mU^\Kontsevich, \mV^\Shoikhet)$. In this case the integral expressions defining $c_{\Gamma^{(III)}_j}$ and $\tilde c_{\tilde \Gamma_j}$ agree. This also shows that in this case
\[
f^{\chain}(x)=
-\frac{1}{2} \log\left( \frac{e^{x/2}-e^{-x/2}}{x} \right).
\]

\item For the stable formality morphism obtained using the Kontsevich ``$\frac 1 2$-propagator'' (see \cite{ALRT}) it has been shown by S. Merkulov \cite[Appendix A]{merkulovexotic} that
\[
f^{\curv}(x)=
\sum_{k=2}^\infty \frac{\zeta(k)}{k(2\pi i)^k}x^k
=
\log \left( \Gamma\left( 1-\frac{x}{2\pi i}\right) \right) - \frac{\gamma}{2\pi i}x
\]
where $\Gamma$, $\zeta$ and $\gamma$ are the $\Gamma$ function, the Riemann $\zeta$ function and the Euler-Mascheroni constant as usual.

\item It is known (see \cite{lemurakami} or \cite[Example 9.1]{AT}) that for the Knizhnik-Zamolodchikov associator
\[
f^\assoc(x) = \sum_{k=2}^\infty \frac{\zeta(k)}{k(2\pi i)^k}x^k
=
\log \left( \Gamma(1-\frac{x}{2\pi i} \right) - \frac{\gamma}{2\pi i}x.
\]

\item One can check that the even part of $f^\assoc(x)$ must be the same for all Drinfeld associators. Since the Alekseev-Torossian associator \cite{ATassoc, pavol} is even, we obtain from the previous example that for the Alekseev-Torossian associator
\[
f^\assoc(x) = \sum_{k=1}^\infty \frac{k\zeta(2k)}{(2\pi i)^{2k}}x^{2k}
=
-\frac{1}{2} \log\left( \frac{e^{x/2}-e^{-x/2}}{x} \right).
\]
\end{itemize}

\end{ex}



\begin{lemma}[Homotopy Invariance]
\label{lem:hominvariant}
Let $\mU^1, \mU^2$ be stable formality morphisms of cochains that are homotopic. 
Let $f^{\Duflo}_1$, $f^{curv}_1$ and $f^{\Duflo}_2$, $f^{curv}_2$ be the associated characteristic functions as defined above. Then $f^{\Duflo}_1=f^{\Duflo}_2$ and $f^{curv}_1=f^{curv}_2$.

Let furthermore $(\mU^1, \mV^1)$ and, $(\mU^2, \mV^2)$ be homotopic stable formality morphisms of cochains and chains and let $f^{chain}_1$ and  $f^{chain}_2$ be the characteristic functions associated to $\mV^1$ and $\mV^2$ as above. Then $f^{chain}_1=f^{chain}_2$.
\end{lemma}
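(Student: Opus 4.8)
The plan is to reduce to a single direct homotopy, pass to the infinitesimal statement, and recognize each characteristic function as a cocycle for the differential that controls homotopies. Since being homotopic is the transitive closure of being directly homotopic, and equality of functions is transitive, it suffices to treat directly homotopic morphisms. So assume $\mU^1, \mU^2$ are connected by a family $\tilde\mU = \tilde\mU_t + h_t\, dt$ as in \eqref{equ:homotopy}, with $\tilde\mU_t$ reducing to $\mU^1$ at $t=0$ and to $\mU^2$ at $t=1$. Writing $\lambda_j(t)$ for the coefficient combination defining a given characteristic function, evaluated on $\tilde\mU_t$, the numbers $\lambda_j(0)$ and $\lambda_j(1)$ compute the functions for $\mU^1$ and $\mU^2$; hence it is enough to prove $\frac{d}{dt}\lambda_j(t)=0$ for all $t$.

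First I would extract the $dt$-component of the $L_\infty$-morphism equation for $\tilde\mU$. The part not involving $dt$ says that $\tilde\mU_t$ is an $L_\infty$ morphism for each fixed $t$; collecting the terms proportional to $dt$ gives the standard homotopy relation expressing $\frac{d}{dt}\tilde\mU_t$ through the homotopy $h_t$, the source Schouten-Nijenhuis bracket, the target Gerstenhaber bracket, and $\tilde\mU_t$ itself. Since each term is a sum over graphs, fixing a target graph $\Gamma$ and reading off its coefficient produces an identity $\frac{d}{dt} c_\Gamma = (\delta h_t)_\Gamma$, where $\delta$ is the differential of the deformation complex of $\tilde\mU_t$ realized on graphs: it combines vertex splitting in the target with merging two type I vertices via the bracket in the source. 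Thus the whole family $\frac{d}{dt}\{c_\Gamma\}$ equals $\delta h_t$, and the desired $\frac{d}{dt}\lambda_j=0$ becomes the statement that the linear functional extracting $\lambda_j$ is a cocycle in the dual complex, i.e.\ vanishes on every $\delta\gamma$.

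The heart of the proof is this cocycle property, which I would check by a direct combinatorial analysis of $\delta$ on the graphs neighbouring the wheels. A graph $\gamma$ can contribute to the coefficient of a wheel ($\Gamma^{(I)}_j$, $\Gamma^{(II)}_j$, $\Gamma^{(III)}_j$, or $\tilde\Gamma_j$) in $\delta\gamma$ only if $\gamma$ is obtained from that wheel by contracting a single spoke or rim edge, or by a source merge; there are only finitely many such $\gamma$. Enumerating them and tracking signs together with the symmetry factors $1/j$, one finds for $f^{\Duflo}$ that every homotopy graph $\gamma$ contributing to $c_{\Gamma^{(I)}_j}$ (the coefficient of $\Gamma^{(I)}_j$ in $\mU_{j+1}$) contributes equally to $c_{\Gamma^{(II)}_j}$ (its coefficient in $\mU_j$), so the two cancel in the difference $c_{\Gamma^{(I)}_j}-c_{\Gamma^{(II)}_j}$ --- this is precisely why $f^{\Duflo}$ is built from that difference. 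For $f^{\curv}$ the spokes of $\Gamma^{(III)}_j$ point outward and the corresponding contributions cancel among themselves, leaving the single coefficient $\delta$-closed. For $f^{\chain}$ the same bookkeeping is carried out in the module deformation complex: the Hochschild differential and the $\Tpoly$-action pulled back along $\tilde\mU_t$ play the role of the brackets, the rim of $\tilde\Gamma_j$ sits on the chain, and the wheel coefficient is again closed.

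The step I expect to be the main obstacle is exactly this last verification: correctly identifying which graphs in $h_t$ produce the designated wheels under $\delta$, and confirming that signs and multiplicities conspire to annihilate the chosen combinations. In particular one must control the graphs with a double edge or a tadpole that can appear upon contracting a rim edge, showing they either drop out by the usual symmetry/orientation argument or cancel against the source-merge terms; and for $f^{\Duflo}$ the cancellation hinges on matching the spoke-contraction of $\Gamma^{(I)}_j$ with the rim-contraction yielding $\Gamma^{(II)}_j$. Once these identifications and signs are pinned down, $\frac{d}{dt}\lambda_j=0$ follows in each case and the lemma is proved.
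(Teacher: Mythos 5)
Your overall skeleton is the same as the paper's: reduce to a single direct homotopy, extract the $dt$-component of the $\Lie_\infty$ relations as in \eqref{equ:homproof}, and show that the particular wheel-coefficient combinations are annihilated by the differential controlling homotopies. The gap is in the key combinatorial step. Your working description of that differential on graphs --- ``vertex splitting in the target'' plus ``merging two type I vertices via the bracket in the source'' --- and consequently your enumeration of the graphs that can hit a wheel (``obtained from that wheel by contracting a single spoke or rim edge, or by a source merge'') effectively drops the term $\co{\mU_t}{h_t}$, i.e.\ graph \emph{insertions} into type II vertices. That term is precisely where the contribution to $c_{\Gamma^{(I)}_j}$ comes from. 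In fact the Schouten term contributes nothing at all to $\Gamma^{(I)}_j$: contracting any spoke or any rim edge of \eqref{equ:wgraph1} creates a double edge, so no admissible Kontsevich graph maps onto it under vertex splitting. Your proposed cancellation --- matching ``the spoke-contraction of $\Gamma^{(I)}_j$'' against ``the rim-contraction yielding $\Gamma^{(II)}_j$'' --- therefore pairs terms of which the $\Gamma^{(I)}$-side is identically zero, and moreover pairs contributions attached to \emph{different} graphs in $h_t$, whose coefficients are independent; no identity of the form $\frac{d}{dt}\bigl(c_{\Gamma^{(I)}_j}-c_{\Gamma^{(II)}_j}\bigr)=0$ can come out of that matching.

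The graph that actually drives the argument never appears in your analysis: the wheel \eqref{equ:whlonebase} whose hub is a type II vertex sitting on the real line. It is not an edge contraction of either \eqref{equ:wgraph1} or \eqref{equ:wgraph2}, so your ``neighbouring graphs'' list misses it. This single homotopy graph produces $\Gamma^{(I)}_j$ through the bracket term $\co{\mU_t}{h_t}$ (insert the trivial one-vertex HKR graph of $\mU_1$ into the type II hub, turning it into the aerial hub of \eqref{equ:wgraph1}), and produces $\Gamma^{(II)}_j$ through $d_H$ (the Hochschild differential splits the type II hub into the two type II vertices of \eqref{equ:wgraph2}); a sign and prefactor check shows the two contributions are equal, which is exactly why the difference defining $f^{\Duflo}$ is constant in $t$. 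A smaller inaccuracy of the same kind occurs in your $f^{\curv}$ case: the right statement is not that contributions to \eqref{equ:spokesout} ``cancel among themselves'' but that there are none --- every edge contraction of the spokes-out wheel produces a double edge, and $d_H$ and the bracket term cannot produce a graph with zero type II vertices of this shape --- so there is nothing to cancel. Your framework could be repaired by putting the insertion term $\co{\mU_t}{h_t}$ back into the graph-level description of the differential and re-running the enumeration, but as written the decisive $f^{\Duflo}$ cancellation fails.
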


\begin{proof}[Proof sketch]
It is sufficient to consider only directly homotopic stable formality morphisms (see section \ref{sec:homotopies}). Let us use the notation from equation \eqref{equ:homotopy}. The $dt$-components of the $\Lie_\infty$ relations for $\tilde \mU$ say that 
\begin{equation}
\label{equ:homproof}
\frac{d}{dt} \tilde \mU_t = \pm d_S h_t \pm d_H h_t \pm \co{\mU_t}{h_t}
\end{equation}
where $d_S$ is a term containing the Schouten-Nijenhuis bracket, $d_H$ is (induced from) the Hochschild differential and the bracket is (induced from) the Gerstenhaber bracket. 

To see the invariance for $f^{\curv}$ one notes that (for large enough $n$) the right hand side cannot contain any terms associated to graphs \eqref{equ:spokesout}, as they could be produced by neither the differential $d_S$ and $d_H$, nor by the Gerstenhaber bracket. Hence $f^{\curv}$ must be the same for each $\tilde \mU_t$. For $f^{chain}$ the argument is analogous.

The case of $f^{\Duflo}$ is more difficult, as the right hand side of \eqref{equ:homproof} may contain graphs of the forms \eqref{equ:wgraph1} and \eqref{equ:wgraph2}. Concretely, both can be produced by terms corresponding to a unique graph in $h_t$, namely the following:

\begin{equation}
\label{equ:whlonebase}
\begin{tikzpicture}
[every edge/.style={draw, triangle 45-}, scale=.5,
baseline=-2ex
]
\node [int] (v7) at (0,-4) {};
\node [int] at (30:2) {};
\node [int] (v4) at (-30:2) {};
\node [int] (v3) at (-90:2) {};
\node  (v6) at (90:2) {$\dots$};
\node  at (90:1) {$\dots$};
\node [int] (v1) at (150:2) {};
\node [int] (v5) at (30:2) {};
\node [int] (v2) at (210:2) {};

\draw  (v1) edge (v2);
\draw  (v2) edge (v3);
\draw  (v3) edge (v4);
\draw  (v4) edge (v5);
\draw  (v5) edge (v6);
\draw  (v6) edge (v1);
\draw  (v7) edge (v2);
\draw  (v7) edge (v3);
\draw  (v7) edge (v4);
\draw  (v7) edge (v5);
\draw  (v7) edge (v1);
\draw (-4,-4) -- (3,-4);
\end{tikzpicture}
\end{equation}
The term $d_H h_t$ (may) contain terms corresponding to the graph \eqref{equ:wgraph2} and the term $\co{\mU_t}{h_t}$ (may) contain terms corresponding to the graph \eqref{equ:wgraph1}.

However, computing the signs and prefactors both contributions are equal and hence $f^{\Duflo}$ remains unchanged.

Note also that graphs of the form 
\begin{equation*}
\begin{tikzpicture}
[every edge/.style={draw, triangle 45-}, scale=.5, baseline=-0.65ex,yshift=1.5cm
]
\node [int] (v7) at (-1,-4) {};
\node [int] at (30:2) {};
\node [int] (v4) at (-30:2) {};
\node [int] (v3) at (-90:2) {};
\node  (v6) at (90:2) {$\dots$};
\node  at (90:1) {$\dots$};
\node [int] (v1) at (150:2) {};
\node [int] (v5) at (30:2) {};
\node [int] (v2) at (210:2) {};
\node[int] (vb) at (1,-4) {};
\draw[font=\tiny]  (v1) edge node[descr,below,sloped] {} (v2);
\draw[font=\tiny]  (v2) edge node[descr,above,sloped,swap, font=\tiny] {} (v3);
\draw[font=\tiny]  (v3) edge node[descr,auto,sloped,swap] {} (v4);
\draw[font=\scriptsize]  (v4) edge node[descr,auto,swap] {} (v5);
\draw[font=\scriptsize]  (v5) edge node[descr,auto,swap] {} (v6);
\draw[font=\tiny]  (v6) edge node[above, sloped] {} (v1);
\draw[font=\tiny]  (v7) edge node[descr,sloped, below] {} (v2);
\draw[font=\tiny]  (v7) edge node[descr,sloped,above,font=\tiny] {} (v3);
\draw[font=\scriptsize]  (vb) edge node[descr,auto] {} (v4);
\draw[font=\scriptsize]  (v7) edge node[descr,auto] {} (v4);
\draw[font=\scriptsize]  (v7) edge node[descr,auto] {} (v5);
\draw[font=\scriptsize]  (v7) edge node[descr, above, sloped, font=\tiny] {} (v1);
\draw (-4,-4) -- (3,-4);
\end{tikzpicture}
\end{equation*}
in $h_t$ do not contribute since the two terms of the form \eqref{equ:wgraph2} that can be produced through $d_S h_t$ occur with opposite signs and hence cancel.
\end{proof}


\subsection{Main result}

The main result of this paper is the following:

\begin{thm}[{Partially contained in \cite{K1}, \cite[section 10]{grt}}]
\label{thm:fs}

\hfill

\begin{enumerate}
\item 
Let $\mU$ be a stable formality morphism of cochains. Then 
\[
f^{Duflo}=f^{curv}.
\]

\item If $\mU$ is obtained from a Drinfeld associator $\Phi$ according to the procedure of Example \ref{ex:drinfeld}, then furthermore
\[
f^{\Duflo}=f^{\curv}=f^{\assoc}.
\]

\item Let $(\mU, \mV)$ be an extension of $\mU$ to a stable formality morphism of cochains and chains. Then 
\[
f^{\Duflo}=f^{\curv}=f^{\chain}.
\]
\end{enumerate}
\end{thm}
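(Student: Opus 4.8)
The plan is to exploit the fact, established in \cite{grt}, that the homotopy classes of stable formality morphisms of cochains form a torsor over the prounipotent group $\exp(H^0(\GC_2))$, together with the identification $H^0(\GC_2)\cong\grt$. By Lemma \ref{lem:hominvariant} all of $f^{\Duflo}$, $f^{\curv}$ and (for the chain extension) $f^{\chain}$ are homotopy invariant, so they descend to honest functions on this torsor; likewise $f^{\assoc}$ is a function on the torsor of Drinfeld associators, on which the Grothendieck--Teichm\"uller group acts freely transitively. To prove that two such homotopy-invariant functions agree everywhere it then suffices to check (a) that they agree at one chosen base point, and (b) that their difference is invariant under the group action. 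For (a) I take the base point to be the Kontsevich morphism $\mU^{\Kontsevich}$ (respectively the even Alekseev--Torossian associator $\Phi_{AT}$, which Example \ref{ex:drinfeld} sends to $\mU^{\Kontsevich}$): Example \ref{ex:samplecalc} records that there all four functions equal the same even series $-\tfrac12\log\!\big(\tfrac{e^{x/2}-e^{-x/2}}{x}\big)$.

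For (b) I would first isolate the even parts. The abelianization $\grt^{\mathrm{ab}}$ is spanned by classes $\sigma_{2k+1}$ living in \emph{odd} degrees $2k+1\ge 3$, represented in $\GC_2$ by wheel cocycles. A parity and degree count then shows that the induced infinitesimal action can only alter the odd-power coefficients $\lambda_j$ of any of the characteristic functions. Consequently the even parts of $f^{\Duflo}$, $f^{\curv}$, $f^{\chain}$ and $f^{\assoc}$ are $\grt$-invariant, hence constant on the respective torsors, and therefore all equal to the base-point value $-\tfrac12\log\!\big(\tfrac{e^{x/2}-e^{-x/2}}{x}\big)$. This already disposes of the even halves of all three identities in Theorem \ref{thm:fs}.

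It remains to match the odd parts, which are governed by the infinitesimal variations $\delta_{\sigma_{2k+1}}$. Because each characteristic function is a \emph{linear} functional of the coefficient vector $\{c_\Gamma\}$, its variation along a fundamental vector field is linear in the generating class and hence factors through $\grt^{\mathrm{ab}}$; moreover a count of vertices and edges shows that, to first order, only the universal leading term $\mU_1=\mathrm{HKR}$ of $\mU$ can combine with a wheel cocycle to reproduce a wheel of the prescribed size \eqref{equ:wgraph1}, \eqref{equ:wgraph2}, \eqref{equ:spokesout}, so the variation is independent of which morphism one sits at and may be computed once and for all. Computing $\delta_{\sigma_{2k+1}}f^{\Duflo}$ and $\delta_{\sigma_{2k+1}}f^{\curv}$ then becomes exactly the bookkeeping already met in the proof of Lemma \ref{lem:hominvariant}: a single wheel class produces, through the Hochschild part $d_H$ and through the bracket $\co{\mU}{\cdot}$ of \eqref{equ:homproof}, contributions proportional to the inward-spoke graphs \eqref{equ:wgraph1}--\eqref{equ:wgraph2} and to the outward-spoke graph \eqref{equ:spokesout} respectively, and the point is that these match with signs and prefactors. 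For the associator side, $\delta_{\sigma_{2k+1}}f^{\assoc}$ is read off from the coefficient of $X^{2k}Y$ contributed by $\sigma_{2k+1}$, and its agreement with the graph computation is the content of the $\grt$-results of \cite{grt}. I expect this sign-and-prefactor computation to be the main obstacle; it is precisely the kind of delicate combinatorics that required a separate treatment in \cite{AMM, mecyccochains}.

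Finally, for part (3) I would run the same argument on the larger torsor of stable formality morphisms of cochains \emph{and} chains, whose base point is the Kontsevich--Shoikhet pair $(\mU^{\Kontsevich},\mV^{\Shoikhet})$. There $f^{\chain}=f^{\curv}$ holds on the nose because the defining integrals of \eqref{equ:spokesout} and of the chain wheel \eqref{equ:chainwheel} coincide, as noted in Example \ref{ex:samplecalc}; the even parts are again universal by parity, and the odd-part variation is computed by the module analogue of \eqref{equ:homproof}, matching the cochain wheel \eqref{equ:spokesout} with the chain wheel \eqref{equ:chainwheel}. The extra care here is that the graph-complex action must be transported to the chain level; this is the delicate input flagged in Example \ref{ex:drinfeld}, but for the present purely numerical statement the homotopy invariance of Lemma \ref{lem:hominvariant} together with the base-point coincidence is what actually does the work.
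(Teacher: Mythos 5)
Your skeleton for parts (1) and (2) is the same as the paper's (homotopy invariance via Lemma \ref{lem:hominvariant}, Dolgushev's transitivity Theorem \ref{thm:transitive}, base-point check at $\mU^{\Kontsevich}$/$\Phi_{AT}$ via Example \ref{ex:samplecalc}, plus invariance of the difference under the group action), but the step that carries all the content --- the invariance of $f^{\Duflo}-f^{\curv}$ under the $\GC_2$-action --- is misidentified in your write-up. The $\GC_2$-action is \emph{precomposition} of $\mU$ with a $\Lie_\infty$-derivation of $\Tpoly(\R^n)[1]$; its infinitesimal variation is \emph{not} of the form \eqref{equ:homproof}. Equation \eqref{equ:homproof} describes variation along a homotopy, and Lemma \ref{lem:hominvariant} says precisely that along such variations \emph{each} of $f^{\Duflo}$, $f^{\curv}$ is separately constant. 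So if the $\GC_2$-variation could be computed "exactly by the bookkeeping of Lemma \ref{lem:hominvariant}," every stable formality morphism would have the same characteristic functions --- contradicting Example \ref{ex:samplecalc}, where the Kontsevich morphism and the $\frac12$-propagator morphism have different $f^{\curv}$ (Bernoulli series vs.\ $\zeta$-values). Your description is also internally off: in the homotopy mechanism $d_H$ produces \eqref{equ:wgraph2}, the bracket produces \eqref{equ:wgraph1}, and the outward-spoke wheel \eqref{equ:spokesout} is produced by neither. The correct structural fact, which is the heart of the paper's proof and involves no deferred sign computation, is that precomposition with a cocycle $\gamma\in\GC_2$ shifts $c_{\Gamma^{(I)}_j}$ and $c_{\Gamma^{(III)}_j}$ by one and the same number (the coefficient of the $j$-wheel in $\gamma$, entering through the term $\mU_1\circ\gamma$) while leaving $c_{\Gamma^{(II)}_j}$ untouched; hence $f^{\Duflo}-f^{\curv}$ is manifestly invariant. (You actually gesture at this with the "only $\mU_1=\mathrm{HKR}$ can contribute" remark, but then route the computation through the wrong formula.) Your even/odd decomposition is a harmless but unnecessary detour; the needed input is only that even wheels vanish in $\GC_2$, not any structure of the abelianization of $\grt$, part of which is conjectural.

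Part (3) has a genuine gap. Your plan runs the torsor argument on "the larger torsor of stable formality morphisms of cochains and chains," but no transitive group action on that set is available: Theorem \ref{thm:transitive} concerns cochain morphisms only, and the paper explicitly flags (Example \ref{ex:drinfeld}) that extending the $\GC_2$-action to chains is not obvious and is deferred to a forthcoming paper. Your fallback --- that "homotopy invariance together with the base-point coincidence is what actually does the work" --- is insufficient: homotopy invariance only says $f^{\chain}$ is constant on homotopy classes, and without transitivity there is no reason an arbitrary pair $(\mU,\mV)$ is homotopic to $(\mU^{\Kontsevich},\mV^{\Shoikhet})$, which is exactly what would be needed to propagate the base-point identity. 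The paper closes this hole by a completely different, direct argument: in the $\Lie_\infty$-module relations satisfied by any $(\mU,\mV)$, the coefficient of the hybrid graph obtained from the wheel by attaching one extra edge towards the chain-type vertex receives contributions only from $\Gamma^{(III)}_j$ (see \eqref{equ:spokesout}) and from $\tilde\Gamma_j$ (see \eqref{equ:chainwheel}); since that coefficient must vanish, $c_{\Gamma^{(III)}_j}=\pm\tilde c_{\tilde\Gamma_j}$ with a universal sign, which is then fixed to be $+$ by the Kontsevich--Shoikhet case. This proves $f^{\curv}=f^{\chain}$ for \emph{every} stable formality morphism of cochains and chains, with no group action on the chain level at all; you would need either this argument or the (unavailable) chain-level transitivity to complete your part (3).
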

The above Theorem can in fact almost be extracted from existing literature. The fact that $f^{\Duflo} = f^{curv}$ is essentially contained in some form in \cite{K1}, and the fact that $f^{curv}=f^{assoc}$ is contained (in an albeit sketchy way) in \cite{grt}. Nevertheless we will give a self-contained proof in section \ref{sec:theproof} below.

\begin{rem}
 In fact, the even part of the characteristic functions above is the same for all stable formality morphisms and agrees with the function
\[
-\frac 1 2 \log \frac{e^{x/2}-e^{-x/2}}{x} = -\sum_{j\geq 1}\frac{B_{2j}}{4j(2j)!}x^{2j}
\]
\end{rem}



%
%
%


%

\subsection*{Acknowledgements}
The author is very grateful for many discussions with Vasily Dolgushev.
I thank the Swiss National Science Foundation (grants PDAMP2\_137151 and 200021\_150012) for partial support.
Part of this work has been written while the author was a Junior Fellow of the Harvard Society of Fellows.

\section{Action of the graph complex}
\label{sec:action}
M. Kontsevich's graph complex $\GC_2$ is a complex formed by formal series of (isomorphism classes of) undirected, at least trivalent, connected graphs.
The simplest non-trivial example of a graph giving rise to an element of $\GC_2$ is the tetrahedron graph
\[
\begin{tikzpicture}
\node[int] (v1) at (0,0) {};
\node[int] (v2) at (1,0) {};
\node[int] (v3) at (0,1) {};
\node[int] (v4) at (1,1) {};
\draw (v1)--(v2)--(v3)--(v4)--(v1)--(v3) (v2)--(v4);
\end{tikzpicture}.
\]
For more details, and the (lengthy) definition of $\GC_2$ we refer the reader to \cite[section 3]{grt}, \cite[section 6]{vasilystable}.
For us, the important fact is that there is a map of dg Lie algebras from $\GC_2$  to the Chevalley complex of $\Tpoly(\R^n)$ for each $n$. In particular, closed degree zero elements of $GC_2$ give rise to $\Lie_\infty$-derivations of $\Tpoly(\R^n)[1]$. Denote the space of closed degree 0 elements by $GC_{2,cl}^0\subset GC_2$. It is a pro-nilpotent Lie algebra, and is the Lie algebra of a prounipotent group 
\[
\ExpGC
\] 
which may be realized as the grouplike elements in the completed universal enveloping algebra of $GC_{2,cl}^0$. The action of $GC_{2,cl}^0$ on $\Tpoly(\R^n)[1]$ by $\Lie_\infty$-derivations integrates to an action of $\ExpGC$ on $\Tpoly(\R^n)[1]$ by $\Lie_\infty$-automorphisms.
 It is then not hard to check that precomposition yields an action of $\ExpGC$ on the set of stable formality morphisms (of cochains). It is clear that this action descends to an action of the homotopy classes of stable formality morphisms.
V. Dolgushev showed the following Theorem, which is important for us.
\begin{thm}[\cite{vasilystable}]
\label{thm:transitive}
The induced action of $\ExpGC$ on the set of homotopy classes of stable formality morphisms is transitive.
\end{thm}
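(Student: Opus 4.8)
The plan is to realize homotopy classes of stable formality morphisms as the connected components $\pi_0$ of the Maurer--Cartan set of a graph-theoretic dg Lie algebra, and then to show that $GC_{2,cl}^0$ surjects onto the tangent cohomology controlling those components. Concretely, a stable formality morphism of cochains is the same datum as a Maurer--Cartan element in the operadic deformation (convolution) complex $\alg{h}=\Def(\Graphs\to\KGra)$ governing maps from a cofibrant model $\Graphs$ of the relevant operad into the Kontsevich graph operad $\KGra$, where the normalization $\mU_1=\mathrm{HKR}$ fixes the lowest component. Two such morphisms are homotopic precisely when the corresponding MC elements are gauge equivalent, i.e. lie in the same component of $\MC(\alg{h})$. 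The action of $\GC_2$ on $\Tpoly(\R^n)[1]$ by $L_\infty$-derivations described in this section refines to an action of $GC_{2,cl}^0$ on $\alg{h}$ by differential-commuting derivations, and its integration is exactly the $\ExpGC$-action in the statement. So it suffices to prove transitivity on $\pi_0\MC(\alg{h})$.

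Next I would reduce this to a tangent-level statement via the standard principle that a prounipotent group action on the MC set of a pronilpotent dg Lie algebra is transitive on $\pi_0$ once, at every MC element $m$, the infinitesimal action $GC_{2,cl}^0\to H^0(\alg{h}^m)$ onto the degree-zero cohomology of the twisted complex $(\alg h, d+[m,-])$ is surjective. Since all stable formality morphisms share the same first component, the filtration by the number of internal vertices sets up an order-by-order induction: having matched $\mU$ and $\mU'$ up to a given filtration level, the next discrepancy is a degree-zero cocycle in $\alg{h}^m$; a suitable element of $GC_{2,cl}^0$ kills its cohomology class, and the remaining exact part is absorbed by a gauge transformation, i.e. a homotopy. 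Pronilpotence guarantees that the resulting infinite product assembles into a genuine element $g=\exp(\gamma)\in\ExpGC$ with $g\cdot\mU$ homotopic to $\mU'$.

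The cohomology computation is the heart of the matter, and identifying $H^\bullet(\alg{h}^m)$ with Kontsevich graph cohomology and proving surjectivity of $GC_{2,cl}^0\to H^0(\alg{h}^m)$ is where I expect the main obstacle to lie. Here I would run a spectral sequence for the filtration by the number of type II (ground) vertices and by loop order, and show that the portions of $\alg{h}$ involving ground vertices or external legs are acyclic in the relevant degrees, so that the complex collapses to the directed graph complex built from $\KGra$. A theorem of the graph-complex literature then reduces this directed Kontsevich graph complex to M. Kontsevich's undirected complex $\GC_2$ (internal vertices forced to be at least trivalent, orientations lost up to acyclic pieces), giving $H^0(\alg{h}^m)\cong H^0(\GC_2)\cong\grt$ and the surjectivity of the map from $GC_{2,cl}^0$. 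Controlling this ``non-$\GC_2$'' part of the deformation complex uniformly in $n$ is the delicate step; it is precisely where Dolgushev's argument in \cite{vasilystable} does the real work, and once it is in place the transitivity in the statement follows formally from the deformation-theoretic induction above.
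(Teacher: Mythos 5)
The paper contains no proof of this theorem to compare against: it is quoted directly from Dolgushev \cite{vasilystable}, and the surrounding text only records the statement because it is needed later. Your outline---stable formality morphisms as Maurer--Cartan elements of a convolution-type dg Lie algebra, homotopy as gauge equivalence, transitivity by an obstruction-theoretic induction over the vertex filtration once $GC_{2,cl}^0$ surjects onto the degree-zero tangent cohomology, and that surjectivity obtained by collapsing the deformation complex to a directed graph complex and then to $\GC_2$---is precisely the strategy of Dolgushev's actual proof in \cite{vasilystable}, and you correctly identify that the genuine content (the cohomology computation of the deformation complex, uniform in $n$, and the directed-to-undirected comparison) lies in that reference rather than in the formal deformation-theoretic scaffolding.
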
 

\section{Proof of the Theorem \ref{thm:fs}}
\label{sec:theproof}
First, let us reduce the statement to the cases involving only stable formality morphisms of cochains by showing that $f^{curv}=f^{chain}$.
For this, consider the coefficient of the graph 
\[
\begin{tikzpicture}
[every edge/.style={draw, -triangle 45}, scale=.5
]
\draw (2,0) circle (4);
\node [int] at (0:6) {};
\node [int] (v7) at (0,0) {};
\node [int] (v4) at (-30:1.5) {};
\node [int] (v3) at (-90:1.5) {};
\node  (v6) at (90:1.5) {$\dots$};
\node  at (90:.75) {$\dots$};
\node [int] (v1) at (150:1.5) {};
\node [int] (v5) at (30:1.5) {};
\node [int] (v2) at (210:1.5) {};
\node [int] (vc) at (2,0) {};
\draw  (v1) edge (v2);
\draw  (v2) edge (v3);
\draw  (v3) edge (v4);
\draw  (v4) edge (v5);
\draw  (v5) edge (v6);
\draw  (v6) edge (v1);
\draw  (v7) edge (v2);
\draw  (v7) edge (v3);
\draw  (v7) edge (v4);
\draw  (v7) edge (v5);
\draw  (v7) edge (v1);
\draw  (vc) edge (v7);
\end{tikzpicture}
\]
in the $\Lie_\infty$ relation for modules. Terms can be contributed by the graph $\Gamma^{(III)}_j$ (see \eqref{equ:spokesout})
and by the graph
$\tilde \Gamma_j$ (see \eqref{equ:chainwheel})
and by no other graphs. Checking the prefactors, It follows that the coefficients need to be equal, up to possibly an overall sign, which depends on conventions, but not on the particular stable formality morphism chosen. However, for the Kontsevich/Shoikhet morphism our conventions and example \ref{ex:samplecalc} say that the sign is ``+'', hence it must be ``+'' for any stable formality morphism.

Next let us turn to the statement that $f^{\curv}=f^{\Duflo}$.
By Lemma \ref{lem:hominvariant} and Theorem \ref{thm:transitive} it suffices to show the following two statements.

\begin{enumerate}
 \item \label{item1} For one particular stable formality morphism $f^{\curv}=f^{\Duflo}$.
 \item The action of degree zero cocycles in $\GC_2$ leaves invariant the expression $f^{\Duflo}(x)-f^{curv}(x)$.
\end{enumerate}

We take for the particular formality morphism that constructed by M. Kontsevich, i. e. $\mU^\Kontsevich$. In this case item \ref{item1} above is settled by example \ref{ex:samplecalc}.

%


Next consider the action of a degree zero cocycle $\Gamma\in \GC_2$. By the explicit description of the action it cannot change the coefficient of the graph $\Gamma^{(II)}_j$ (see \eqref{equ:wgraph2}) in a stable formality morphism. Furthermore it changes both the coefficients of the graphs $\Gamma^{(II)}_j$ and $\Gamma^{(III)}_j$ (see \eqref{equ:wgraph1}, \eqref{equ:spokesout}) by the coefficient of the wheel graph 
\[
\begin{tikzpicture}[scale=.5]
\node [int] (v7) at (0,0) {};
\node [int] at (30:2) {};
\node [int] (v4) at (-30:2) {};
\node [int] (v3) at (-90:2) {};
\node  (v6) at (90:2) {$\dots$};
\node  at (90:1) {$\dots$};
\node [int] (v1) at (150:2) {};
\node [int] (v5) at (30:2) {};
\node [int] (v2) at (210:2) {};
\draw  (v1) edge (v2);
\draw  (v2) edge (v3);
\draw  (v3) edge (v4);
\draw  (v4) edge (v5);
\draw  (v5) edge (v6);
\draw  (v6) edge (v1);
\draw  (v7) edge (v2);
\draw  (v7) edge (v3);
\draw  (v7) edge (v4);
\draw  (v7) edge (v5);
\draw  (v7) edge (v1);
\end{tikzpicture}
\]
in $\Gamma$. In particular the quantity $f^{\Duflo}(x)-f^{\curv}(x)$ is unchanged.
Hence we have shown that $f^{\curv}=f^{\Duflo}$ for all stable formality morphisms.

To show the final assertion of Theorem \ref{thm:fs} the proof is similar and has been given in \cite{grt}. We recall it here. It clearly suffices to show the following.
\begin{enumerate}
\item \label{item_1} For the Kontsevich stable formality morphism and the Alekseev-Torossian Drinfeld associator, $f^{\assoc}=f^{\curv}$.

\item The difference $f^{\assoc}-f^{\curv}$ is invariant under the action of the Grothendieck-Teichm\"uller Lie algebra $\grt$, where to define its action on stable formality morphisms one uses the map from $\grt_1$ to $H(\GC_2)$ as in Example \ref{ex:drinfeld}.
\end{enumerate}

Again, item \ref{item_1} has been settled by Example \ref{ex:samplecalc}.
Furthermore the cycle in graph homology $s_n$ that picks out the coefficient of the wheel graph with $n$ spokes ($n$ odd) is shown in \cite[Proposition 9.1]{grt} to correspond to the cochain of the Grothendieck-Teichm\"uller Lie algebra $\grt_1$ that picks out the coefficient of 
\[
\ad_X^{n-1}Y
\]
of elements in $\grt_1$. The action of some $\grt_1$ element on a Drinfeld associator changes the coefficient of $X^{n-1}Y$ of the associator by precisely this term. Hence Theorem \ref{thm:fs} follows.

\section{Application: Star products on duals of Lie algebras}
Let $\alg g$ be any Lie algebra, $U\alg g$ its universal enveloping algebra, and $S\alg g$ the symmetric algebra. The Poincar\'e-Birkhoff-Witt isomorphism   
\[
\phi_{\PBW} \colon S\alg g \to U\alg g
\]
endows $S\alg g$ with an associative (but not necessarily commutative) product $\star_{\PBW}$ via pullback, i. e.,
\[
p \star_{\PBW} q := \phi_{\PBW}^{-1} (\phi_{\PBW}(p)\phi_{\PBW}(q))
\] 
for any $p,q\in S\alg g$.

Furthermore, for any Lie algebra $\alg g$ the dual space $\alg g^*$ carries a canonical Poisson structure, the Kirillov-Kostant Poisson structure.
 A stable formality morphism provides us (in particular) with an associative product $\star$ on $S\alg g$. This product in general depends on the stable formality morphism chosen. However, it is an elementary exercise to check that any such product is the pull-back of $\star_{\PBW}$ via an automorphism of the vector space $S\alg g$ of the form
\begin{equation}
\label{equ:Phidefi}
\Psi = \exp\left( \sum_{j\geq 2} c_j \tr(\ad_\p^j)  \right)
\end{equation}
for some constants $c_j$. Here 
\[
\tr(\ad_\p^j) := 
f_{i_1}^{k_1i_j}f_{i_2}^{k_2i_1}\cdots f_{i_j}^{k_ji_{j-1}}
\p_{k_1}\cdots \p_{k_j}
\]
are differential operators where $f^{ab}_c$ are the structure constants of the Lie algebra and summation over repeated indices is assumed.
Note that the constants $c_j$ are not characters of the stable formality morphism, i. e., they may change upon changing the stable formality morphism to a gauge equivalent one.
However, there is the following result.
\begin{prop}
Given a stable formality morphism define the formal series $f(x):=-\sum_j \frac{(-1)^j}{j} c_j x^j$, where the $c_j$ are as in \eqref{equ:Phidefi}. If the stable formality morphism is such that the weights $c_{\Gamma_j^{(I)}}$ of graphs $\Gamma_j^{(I)}$ (cf. \eqref{equ:wgraph1}) vanish for all $j$, then 
$f$ agrees with the characteristic function defined above, i.e.,
\[
f=f^{\Duflo}=f^{\curv}.
\]
\end{prop}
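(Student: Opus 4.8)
The plan is to reduce the claim to the single identity $f=f^{\Duflo}$ and then to express the constants $c_j$ directly through the weights $c_{\Gamma^{(II)}_j}$ of the graph \eqref{equ:wgraph2}. Indeed, part (1) of Theorem \ref{thm:fs} already gives $f^{\Duflo}=f^{\curv}$, so the asserted chain of equalities follows once $f=f^{\Duflo}$ is established.

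First I would make the product $\star$ explicit in graphical terms. For $p,q\in S\alg g$ one has $p\star q=\sum_{k}\sum_\Gamma c_\Gamma D_\Gamma(\pi,\dots,\pi)(p,q)$, where $\pi$ is the linear Kirillov--Kostant bivector and $\Gamma$ runs over Kontsevich graphs with exactly two ground vertices. Because $\pi$ is linear, each aerial vertex must absorb exactly one incoming edge, so the only graphs that can raise the differential order of the output past the ``tree'' level are those carrying a single oriented cycle of aerial vertices; these wheels are what force the comparison automorphism $\Psi$ of \eqref{equ:Phidefi} to be a series in the operators $\tr(\ad_\p^j)$. This is precisely the mechanism of Kontsevich's Section~8, and it also underlies the ``elementary exercise'' that $\star=\Psi^*\star_{\PBW}$.

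The key step is to read off $c_j$ from the weight of a single graph. Expanding $\star=\Psi^*\star_{\PBW}$ to first order in the wheel operators gives $p\star q-p\star_{\PBW}q=\sum_j c_j\big(\tr(\ad_\p^j)(p)\,q+p\,\tr(\ad_\p^j)(q)-\tr(\ad_\p^j)(pq)\big)+\cdots$, and the genuinely bidifferential ``cross term'' in the parenthesis is exactly the operator attached to the straddling wheel \eqref{equ:wgraph2}, i.e. to $\Gamma^{(II)}_j$. Matching this against the wheel part of $p\star q$, and tracking the reversal of the $j$ spokes (which produces the sign $(-1)^j$ built into the definition of $f$) together with the cyclic symmetry factor $\tfrac1j$, identifies $c_j$ with $c_{\Gamma^{(II)}_j}$ up to the fixed normalization. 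Since $\Gamma^{(I)}_j$ of \eqref{equ:wgraph1} has no ground vertices, it never occurs in the product of two functions and hence contributes nothing to $c_j$. Consequently the defining series $f(x)=-\sum_{j}\tfrac{(-1)^j}{j}c_j x^j=-\sum_{j\geq 2}\tfrac1j c_{\Gamma^{(II)}_j}x^j$, whereas $f^{\Duflo}(x)=\sum_{j\geq 2}\tfrac1j\big(c_{\Gamma^{(I)}_j}-c_{\Gamma^{(II)}_j}\big)x^j$; these coincide exactly when $c_{\Gamma^{(I)}_j}=0$ for all $j$, which is the hypothesis.

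The main obstacle is this last matching with all signs and combinatorial prefactors in place: one must verify that $\Gamma^{(II)}_j$ is the unique wheel graph producing the bidifferential cross term, determine its coefficient, and confirm the exact normalization relating $c_j$ and $c_{\Gamma^{(II)}_j}$. The conceptual subtlety is that $c_j$, and hence $f$, is gauge dependent whereas $f^{\Duflo}$ is a homotopy invariant; the computation shows that $c_j$ only ever ``sees'' the ground-touching wheel $\Gamma^{(II)}_j$, so the gauge-dependent discrepancy between $f$ and $f^{\Duflo}$ is carried entirely by the weights $c_{\Gamma^{(I)}_j}$, and the hypothesis is precisely the condition annihilating it. As a consistency check one may note that the Kontsevich morphism itself satisfies $c_{\Gamma^{(I)}_j}=0$ by Shoikhet's vanishing result recalled in Example \ref{ex:samplecalc}, and there $f$ is indeed the Bernoulli--Duflo series $-\tfrac12\log\frac{e^{x/2}-e^{-x/2}}{x}$.
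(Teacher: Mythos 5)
Your overall strategy coincides with the paper's: under the hypothesis $c_{\Gamma^{(I)}_j}=0$ one has $f^{\Duflo}(x)=-\sum_{j\ge 2}\tfrac1j c_{\Gamma^{(II)}_j}x^j$; the PBW product contains no wheel terms, so any term of type $\Gamma^{(II)}_j$ in $\star$ must be produced by the pullback along $\Psi$, whence $c_j$ and $c_{\Gamma^{(II)}_j}$ agree up to a combinatorial prefactor; and $f^{\Duflo}=f^{\curv}$ is quoted from Theorem \ref{thm:fs}. Up to this point your argument and the paper's are the same.

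The genuine gap sits exactly where you place ``the main obstacle'': the normalization $(-1)^j c_j = c_{\Gamma^{(II)}_j}$. You assert it by appeal to ``reversal of the $j$ spokes'' and ``the cyclic symmetry factor $\tfrac1j$'' but do not carry out the computation; note that, writing $T_j=\tr(\ad_\p^j)$, the cross term $T_j(p)\,q+p\,T_j(q)-T_j(pq)$ contains \emph{all} $(k,j-k)$ distributions of the $j$ derivatives over the two arguments, not only the $(j-1,1)$ distribution corresponding to $\Gamma^{(II)}_j$, and the Leibniz expansion produces that distribution with multiplicity $j$, which must then be weighed against the symmetry and $\Lie_\infty$ normalizations hidden in $D_\Gamma$ — precisely the bookkeeping the paper declines to do. More importantly, your proposed fallback, the consistency check against the Kontsevich morphism, cannot close this gap: Kontsevich's characteristic function is even, so for odd $j$ both $c_j$ and $c_{\Gamma^{(II)}_j}$ vanish and the check reads $0=(\text{prefactor})\cdot 0$, determining nothing about the odd prefactors, which are genuinely needed since general stable formality morphisms (e.g.\ the one built from the $\tfrac12$-propagator) have nonvanishing odd wheel coefficients. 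The paper closes this gap by a different device: the prefactor is universal, i.e.\ independent of the stable formality morphism, and it is then evaluated against C.~Rossi's computation for the $\tfrac12$-propagator morphism, for which $f=f^{\curv}$ and \emph{all} $c_j\neq 0$, including odd $j$, forcing every prefactor to equal $+1$. Substituting this comparison for your Kontsevich check (or actually performing the combinatorial computation you defer) is what is needed to make your proposal into a proof.
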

\begin{proof}
Under the assumptions given $\lambda_j^{\Duflo}=-\frac 1 j c_{\Gamma^{(II)}_j}$. However, it is not hard to check that $\star_{PBW}$ does not contain terms corresponding to graphs $\Gamma^{(II)}_j$. They have to be produced via pullback with $\tr(\ad_\p^j)$ and hence the respective coefficients need to agree, up to a combinatorial prefactor, independent of the stable formality morphism under consideration. Unwinding conventions left implicit in this paper the combinatorial prefactor could be computed.
However, to settle the prefactors it is also sufficient to check that both characteristic functions agree for one stable formality morphism for which all of the coefficients of $f$ are non-zero. It has been shown by C. Rossi \cite{rossiexplicit} that for the Kontsevich formality morphism with $\frac 1 2$-propagator, $f=f^\curv$ (cf. also Example \ref{ex:samplecalc}). Since in this case all $c_j\neq 0$, the combinatorial prefactors must all be $+1$. 
\end{proof}

In the special case of the Kontsevich stable formality morphism, $\Psi$ becomes the Duflo morphism, hence the name of $f^{\Duflo}$. Special cases of the above proposition have been shown in \cite{K1}, \cite[Appendix F]{FWirrationality} and \cite{calaquerelations, rossiexplicit}. 

\section{The ``two branes'' case}
The above results may be extended slightly to apply to the formality morphisms ``with branes'' introduced by Calaque, Felder, Ferrario and Rossi \cite{calaquebranes}. 
In particular, one may identify a characteristic function for ``stable versions'' (i. e. given by sum-of-graphs formulas) of such morphisms, which has been used implicitly in \cite{calaquerelations, rossiexplicit}.
This function turns out to agree with the characteristic functions discussed above. 

Let us begin by reviewing the results of \cite{calaquebranes}. 
Consider the polynomial (or exterior) algebras $A=\R[X_1,\dots, X_n]$ and $B=\R[\xi_1,\dots, \xi_n]$ where the formal variables $X_1,\dots, X_n$ live in degree 0, while the formal variables $\xi_1,\dots, \xi_n$ live in degree 1. $A$ and $B$ are Koszul dual algebras. One may show this by showing that the Koszul complex 
\[
A\otimes B^*
\]
has cohomology $\R$. Note also that $A\otimes B^*$ carries a natural $A$-$B$ bimodule structure.

 The first result of \cite{calaquebranes} is an explicit construction of an $\Ass_\infty$ $A$-$B$ bimodule structure on $K=\R$. It was shown in \cite{rossikoszul} that the bimodule $K$ is in fact $\Ass_\infty$ quasi-isomorphic to $A\otimes B^*$.
 
One may package $A$, $B$ and $K$ into an $A_\infty$ category $\mathsf{Cat}_\infty(A,B,K)$ (notation as in \cite{calaquebranes}) with objects $A$ and $B$ and the space of morphisms between $A$ and $B$ being $K$. The second result of \cite{calaquebranes} is the construction of a $\Lie_\infty$ morphism 
\[
\Tpoly(\R^n)[1] \to C(\mathsf{Cat}_\infty(A,B,K))[1]
\]
where the right hand side is the Hochschild complex of $\mathsf{Cat}_\infty(A,B,K)$.
This morphism contains the Kontsevich formality morphism $\mU^{\Kontsevich}$ from above.

One may package both the $\Ass_\infty$ bimodule structure and the $\Lie_\infty$ morphism into a ``non-flat'' $\Lie_\infty$ morphism, i.~e., a $\Lie_\infty$ morphism with non-vanishing zeroth term, which encodes the bimodule structure. This morphism is also given by a sum-of-graphs formula of the form
\[
\mW_k^{\CFFR} = \sum_{\bar \Gamma} \bar c_{\bar\Gamma}^{\CFFR} D_{\bar\Gamma}.
\]
Here the graphs summed over are essentially Kontsevich graphs, possibly with one distinguished type II vertex.
For a more precise definition, we refer the reader to \cite{calaquebranes}.

In analogy with definition \ref{def:stable} above we may define a stable formality morphism of Calaque, Felder, Ferrario and Rossi (CFFR) type to be a collection of numbers $c_{\bar\Gamma}$ such that 
\[
\mW_k = \sum_{\bar \Gamma} c_{\bar\Gamma} D_{\bar\Gamma}.
\]
defines a non-flat $\Lie_\infty$ morphism for all $n$, and such that (i) the restriction to Kontsevich type graphs yields a stable formality morphism and (ii) the two graphs below have coefficient 1.
\begin{align*}
\begin{tikzpicture}[every text node part/.style={align=center, font=\scriptsize}, scale=1]
\draw (-1,0)--(2,0);
\node[int] at (0,0) {};
\node [xit, label=-90:{distinguished \\vertex}] (dist) at (1,0) {};
\begin{scope}[xshift=6cm]
\draw (-1,0)--(2,0);
\node[int] at (1,0) {};
\node [xit, label=-90:{distinguished \\vertex}] (dist) at (0,0) {};
\end{scope}
\end{tikzpicture}
\end{align*}
These graphs are the leading contribution to the bimodule structure.

Such stable formality morphisms possess a characteristic function
\[
f^{\brane}(x) = \sum_{j\geq 2}\lambda_j^{\brane} x^j
\]
where $\lambda_j^{\brane} =\frac 1 j c_{\Gamma_j^{I}} + \frac 1 j \bar c_{\bar \Gamma_j}$, with $\Gamma_j^{I}$ as depicted in \eqref{equ:wgraph1}, and $\bar \Gamma_j$ as follows:
\[
\bar \Gamma_j = 
\begin{tikzpicture}
[every edge/.style={draw, triangle 45-}, scale=.5, baseline=-0.65ex, yshift=1cm,
every text node part/.style={align=center}]
\node [int] (v7) at (0,-4) {};
\node [int] at (30:2) {};
\node [int] (v4) at (-30:2) {};
\node [int] (v3) at (-90:2) {};
\node  (v6) at (90:2) {$\dots$};
\node  at (90:1) {$\dots$};
\node [int] (v1) at (150:2) {};
\node [int] (v5) at (30:2) {};
\node [int] (v2) at (210:2) {};
\node [xit, label=-90:{distinguished \\vertex}] (dist) at (4,-4) {};

\draw  (v1) edge (v2);
\draw  (v2) edge (v3);
\draw  (v3) edge (v4);
\draw  (v4) edge (v5);
\draw  (v5) edge (v6);
\draw  (v6) edge (v1);
\draw  (v7) edge (v2);
\draw  (v7) edge (v3);
\draw  (v7) edge (v4);
\draw  (v7) edge (v5);
\draw  (v7) edge (v1);
\draw (-4,-4) -- (8,-4);
\end{tikzpicture}
\]

It may be verified that $f^{\brane}$ is indeed a characteristic function, i.~e., it does not change when changing the stable formality morphism of CFFR type to a homotopic one. Note that this is not true if one omits the term $c_{\Gamma_j^{I}}$ from the definition.
The characteristic function $f^{\brane}$ is implicitly used in \cite{calaquerelations, rossiexplicit, twmodules}, where it is shown to agree with $f^{\curv}$ for two special stable formality morphisms of CFFR type.
We have the following general result:
\begin{prop}
$f^{\brane}=f^{\curv}$ for all stable formality morphisms of CFFR type.
\end{prop}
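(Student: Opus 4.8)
The plan is to run the same three-step scheme that established $f^{\curv}=f^{\Duflo}$. Both $f^{\brane}$ and $f^{\curv}$ are homotopy invariant functions on stable formality morphisms of CFFR type: for $f^{\curv}$ this is Lemma~\ref{lem:hominvariant}, because $f^{\curv}$ only sees the underlying cochain morphism and a homotopy of CFFR-type morphisms restricts to a homotopy of that morphism; for $f^{\brane}$ it is the invariance asserted just above the Proposition. Hence it suffices (i) to prove $f^{\brane}=f^{\curv}$ on a family of CFFR-type morphisms meeting every homotopy class, and (ii) to show that the difference $f^{\brane}-f^{\curv}$ is preserved by the relevant symmetry action. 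For (i) I would invoke the explicit computations of \cite{calaquerelations, rossiexplicit, twmodules}, where $f^{\brane}=f^{\curv}$ is verified for two concrete CFFR-type morphisms, as the base case.

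For the invariance step (ii), recall from the proof of $f^{\curv}=f^{\Duflo}$ that a degree-zero cocycle $\Gamma\in\GC_2$, acting on the underlying cochain morphism, changes the coefficients of both $\Gamma_j^{(I)}$ (see \eqref{equ:wgraph1}) and $\Gamma_j^{(III)}$ (see \eqref{equ:spokesout}) by exactly the same quantity, namely the coefficient of the undirected $j$-spoke wheel in $\Gamma$. The new ingredient I must check is that the brane coefficient $\bar c_{\bar\Gamma_j}$ is left unchanged by this action: in $\bar\Gamma_j$ the distinguished type~II vertex is isolated, so the graph-insertion action on the bulk cannot produce it, exactly as $c_{\Gamma_j^{(II)}}$ was unaffected in the cochain case. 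Granting this, $\lambda_j^{\brane}=\frac1j\big(c_{\Gamma_j^{(I)}}+\bar c_{\bar\Gamma_j}\big)$ and $\lambda_j^{\curv}=\frac1j c_{\Gamma_j^{(III)}}$ shift by the same amount, so $f^{\brane}-f^{\curv}$ is invariant. This is also precisely what explains the earlier remark that the term $c_{\Gamma_j^{(I)}}$ may not be dropped: it is exactly the summand needed to cancel the shift of $c_{\Gamma_j^{(III)}}$.

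The main obstacle is the reduction in (i), since Theorem~\ref{thm:transitive} is stated only for morphisms of cochains, and I would need the analogous transitivity of the (extended) action on homotopy classes of CFFR-type morphisms for the base case plus invariance to cover everything. I expect this to follow from the same deformation-theoretic arguments as Theorem~\ref{thm:transitive}, now applied to the relative graph complex controlling the brane data. Alternatively, and perhaps more cleanly, one can bypass transitivity entirely by arguing directly as in the proof of $f^{\curv}=f^{\chain}$: choose a single graph in the non-flat $\Lie_\infty$ relation for $\mW$ whose coefficient receives contributions only from $\Gamma_j^{(I)}$, from $\bar\Gamma_j$, and from the curvature graph $\Gamma_j^{(III)}$, and read off the identity $c_{\Gamma_j^{(I)}}+\bar c_{\bar\Gamma_j}=c_{\Gamma_j^{(III)}}$ up to an overall sign. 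In either route the only genuine labor is the sign-and-prefactor bookkeeping, and, as everywhere else in this paper, the overall sign is fixed by comparison with the explicit morphisms of \cite{calaquerelations, rossiexplicit}, for which all the relevant coefficients are nonzero.
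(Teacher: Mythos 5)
Your ``alternative'' route \emph{is} the paper's proof. The paper's proof sketch consists of exactly the observation you make at the end: the identity $c_{\Gamma_j^{(I)}}+\bar c_{\bar\Gamma_j}=c_{\Gamma_j^{(III)}}$ (equivalently $f^{\brane}=f^{\curv}$) is equivalent to the vanishing, for $n$ large enough, of the coefficient of a single graph in the non-flat $\Lie_\infty$ relations for $\mW$, namely the graph consisting of the spokes-inward wheel \eqref{equ:wgraph1} together with the isolated distinguished type II vertex on the line. Since those relations hold by definition of a stable formality morphism of CFFR type, the identity follows, with the overall sign/prefactor pinned down by one explicit example exactly as in the paper's proof that $f^{\curv}=f^{\chain}$. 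So if you discard your primary plan and promote the fallback, you have reproduced the paper's argument.

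Your primary route, by contrast, cannot be completed from what the paper provides, and the obstruction is larger than the one you flag. Besides transitivity (Theorem~\ref{thm:transitive} is proved only for morphisms of cochains), the $\GC_2$ action itself has not been constructed on CFFR-type morphisms; the paper notes that even the extension of this action to morphisms of chains ``is not entirely obvious'' and is deferred to a forthcoming paper, so your invariance step (ii) rests on an object that does not yet exist, and your claim that $\bar c_{\bar\Gamma_j}$ is unchanged under it cannot be checked. Also, the homotopy invariance of $f^{\brane}$ is only asserted in the paper (``It may be verified\dots''), not proved, and the two explicit morphisms from the literature lie a priori in at most two homotopy classes, which without transitivity covers almost nothing. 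All of these gaps are avoided by the direct argument, which is presumably why the paper uses it.
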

\begin{proof}[Proof sketch.]
The statement is equivalent to saying that the coefficients of the terms associated to the graphs 
\[
\begin{tikzpicture}
[every edge/.style={draw, triangle 45-},
scale=.5, baseline=-0.65ex, yshift=1.5cm, every text node part/.style={align=center}
]
\node [int] (v7) at (0,0) {};
\node [int] at (30:2) {};
\node [int] (v4) at (-30:2) {};
\node [int] (v3) at (-90:2) {};
\node  (v6) at (90:2) {$\dots$};
\node  at (90:1) {$\dots$};
\node [int] (v1) at (150:2) {};
\node [int] (v5) at (30:2) {};
\node [int] (v2) at (210:2) {};

\node [xit, label=-90:{distinguished \\vertex}] (dist) at (4,-4) {};
\draw  (v1) edge (v2);
\draw  (v2) edge (v3);
\draw  (v3) edge (v4);
\draw  (v4) edge (v5);
\draw  (v5) edge (v6);
\draw  (v6) edge (v1);
\draw  (v7) edge (v2);
\draw  (v7) edge (v3);
\draw  (v7) edge (v4);
\draw  (v7) edge (v5);
\draw  (v7) edge (v1);
\draw (-4,-4) -- (8,-4);
\end{tikzpicture}
\]
in the $\Lie_\infty$ relations vanish (for $n$ big enough). 
\end{proof}

\end{document}